\newtheorem{theorem}{Theorem}[section]
\newtheorem{lemma}[theorem]{Lemma}
\newtheorem{corollary}[theorem]{Corollary}
\newtheorem*{theorem*}{Theorem}
\theoremstyle{definition}
\newtheorem*{defn}{Definition}
\newcommand{\gO}{{\mathfrak O}}
\newcommand{\OK}{{\gO_K}}
\newcommand{\OL}{{\gO_L}}
\renewcommand{\OE}{{\gO_E}}
\newcommand{\A}{{\mathfrak{A}}}
\newcommand{\ALK}{{\mathfrak{A}_{L/K}}}
\newcommand{\AEF}{{\mathfrak{A}_{E/F}}}
\newcommand{\scrS}{{\mathcal{S}}}
\newcommand{\bmax}{{b_\mathrm{max}}}
\newcommand{\Gal}{{\rm Gal}}
\newcommand{\Z}{{\mathbb Z}}
\newcommand{\F}{{\mathbb F}}
\newcommand{\gP}{\mathfrak{P}}
\newcommand{\Q}{{\mathbb Q}}
\newcommand{\LRA}{\Leftrightarrow}
\begin{document}

\title[Galois Module Structure]{Integral Galois Module Structure for 
 Elementary Abelian Extensions with a 
Galois Scaffold}

\author{Nigel P.~Byott and G.~Griffith Elder}

\address{School of Engineering, Computer Science and
  Mathematics, University of Exeter, Exeter EX4 4QE U.K.}
\email{N.P.Byott@ex.ac.uk}

\address{Department of Mathematics \\ University of Nebraska at Omaha\\ Omaha, NE 68182-0243 U.S.A.}  
\email{elder@unomaha.edu} 
\date{February 12, 2009}
\subjclass{11S15, 11R33} \keywords{Galois module structure, Associated Order}

\bibliographystyle{amsalpha}

\begin{abstract}
This paper justifies an assertion in
\cite{elder} that Galois scaffolds make the questions of Galois module
structure tractable.  Let $k$ be a perfect field of characteristic $p$
and let $K=k((T))$. For the class of characteristic $p$ elementary
abelian $p$-extensions $L/K$ with Galois scaffolds described in
\cite{elder}, we give a necessary and sufficient condition for the
valuation ring $\OL$ to be free over its associated order $\ALK$ in
$K[\Gal(L/K)]$.  Interestingly, this condition agrees with the condition
found by Y.~Miyata, concerning a class of cyclic Kummer extensions in
characteristic zero.
\end{abstract}

\maketitle

\section{Introduction} \label{n1d}

Let $k$ be a perfect field of characteristic $p>0$, and let $K=k((t))$
be a local function field over $k$ of dimension 1. For any finite
extension $L$ of $K$, we write $\OL$ for the valuation ring of $L$ and
$v_L \colon L \twoheadrightarrow \Z \cup\{\infty\}$ for the normalized
valuation on $L$.  If $L/K$ is a Galois extension with Galois group
$G=\Gal(L/K)$, we write
$$ \ALK = \left\{ \alpha \in K[G] \mid 
             \alpha \OL \subseteq \OL \right\} $$
for the associated order of $\OL$ in the group algebra $K[G]$. Then
$\ALK$ is an $\OK$-order in $K[G]$ containing $\OK[G]$, and $\OL$ is
a module over $\ALK$. It is natural then to ask whether $\OL$ is a
free over $\ALK$.

This question was investigated by Aiba \cite{A} and by de Smit and
Thomas \cite{dS-T} when $L/K$ is an extension of degree $p$ (for the
analogous results in characteristic zero, see \cite{BF,BBF}).
Ramified cyclic extensions of degree $p$ in characteristic $p$ are
special in that they possess a particular property, a Galois scaffold.
In \cite{elder}, a class of arbitrarily large fully ramified
elementary abelian $p$-extensions $L/K$, the {\em near elementary
  abelian extensions}, was introduced. These extensions are similarly
special. They too possess a Galois scaffold.

\begin{defn}
Let $K=k((t))$ as above. An elementary abelian extension
$L=K(x_0,\ldots,x_{n})$ of $K$ of degree $q=p^{n+1}$ is a {\em
  one-dimensional elementary abelian extension} of $K$ if
$x_i^p-x_i=\Omega_i^{p^{n}} \beta$ for elements $\beta\in K$ and $\Omega_0=1,
\Omega_1, \ldots \Omega_{n}\in K$ such that $v_K(\beta)=-b<0$ with
$(b,p)=1$, and $v_K(\Omega_{n}) \leq \ldots \leq v_K(\Omega_1) \leq
v_K(\Omega_0)=0$, with the further condition that whenever
$v_K(\Omega_i)=\cdots =v_K(\Omega_j)$ for $i<j$, the projections of
$\Omega_i,\ldots ,\Omega_j$ into $\Omega_i\OK/\Omega_i\gP_K$ are
linearly independent over the field with $p$ elements.

More generally, $L$ is a {\em near one-dimensional elementary abelian extension} of $K$
if 
$$ x_i^p-x_i=\Omega_i^{p^{n}}\beta + \epsilon_i 
    \mbox{ for } 0 \leq i \leq n,  $$
 where $\beta$, $\Omega_0, \ldots, \Omega_{n}$ are as above, and the
 ``error terms'' $\epsilon_i\in K$ satisfy
$$ v_K(\epsilon_i) > v_K(\Omega_i^{p^{n}}\beta) + 
    \frac{(p^{n}-1)b}{p^{n}} - (p-1) \sum_{j=1}^{n-1}
    p^jv_K(\Omega_j). $$
\end{defn}

The purpose of this paper to use the Galois scaffold for near
one-dimensional elementary abelian extensions (restated here as
Theorem \ref{scaffold}) to determine a necessary and sufficient
condition for $\OL$ to be free over $\ALK$. So that we can state out
main result (Theorem \ref{assoc-main}), we introduce additional
notation.

As observed in \cite{elder}, any near one-dimensional elementary
abelian extension $L/K$ is totally ramified, and its lower
ramification numbers are the distinct elements in the sequence
\begin{equation} \label{ramnos}
  b_{(i)}= b+ p^n \sum_{j=1}^i p^jm_j 
\end{equation}
where $m_j=v_K(\Omega_{j-1})-v_K(\Omega_j)$.
This means that the first ramification number of $L/K$ is $b$, and
that all the (lower) ramification numbers are congruent modulo
$q=p^{n+1}$ to $r(b)$, the least non-negative residue of $b$.

Given any integer $j$, let
$j_{(s)}$ denote the
base-$p$ digits of $j$:
$$ j = \sum_{s=0}^\infty j_{(s)} p^s$$ with $0\leq j_{(s)}
< p$ and $j_{(s)}=0$ for $s$ large enough. Thus
$r(b)=\sum_{s=0}^n b_{(s)} p^s$.  Following \cite{NB-M}, we define
a set $\scrS(q)$.

\begin{defn} 
Given $c\in \Z$ with $(c,p)=1$, let $h=h_c$ be the unique solution of $hc
\equiv -1 \pmod{q}$, $1 \leq h \leq q-1$. Then $\scrS(q)$ consists of
all integers $c$ with $(c,p)=1$ and $1 \leq c \leq q-1$ satisfying the
following property: For all $u$, $v \geq 1$ with $u+v<b$ there exists
$s \in \{0,\ldots,n\}$ with
$$ (hu)_{(s)}+(hv)_{(s)} < p-1. $$
\end{defn}
The main result of this paper is the following
\begin{theorem} \label{assoc-main}
Let $L/K$ be any near one-dimensional elementary abelian extension
$L/K$ of degree $q=p^{n+1}$. Then $\OL$ is free over its associated order
$\ALK$ if and only if $r(b) \in \scrS(q)$. 
\end{theorem}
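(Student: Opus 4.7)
The plan is to exploit the Galois scaffold furnished by Theorem \ref{scaffold} to convert the question of $\ALK$-freeness into an explicit combinatorial problem in base-$p$ arithmetic, which can then be matched against the definition of $\scrS(q)$.

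First, using the scaffold, I would work with elements $\Psi_0,\ldots,\Psi_n \in K[G]$ that act on a carefully chosen element $\lambda \in L$ like truncated Artin--Schreier operators. The monomials $\Psi^{\mathbf{i}} = \Psi_0^{i_0}\cdots\Psi_n^{i_n}$ with $0 \le i_s \le p-1$ form a $K$-basis of $K[G]$, and the valuations $v_L(\Psi^{\mathbf{i}} \lambda)$ are congruent to $r(b) \pmod{q}$ plus an explicit integer combination of the lower ramification numbers $b_{(i)}$ dictated by $\mathbf{i}$. Writing a general element of $K[G]$ as $\sum_{\mathbf{i}} c_{\mathbf{i}} \Psi^{\mathbf{i}}$ with $c_{\mathbf{i}} \in K$ and testing its action on an $\OK$-generating set of $\OL$, I would derive an explicit $\OK$-module description
\[
 \ALK \;=\; \bigoplus_{\mathbf{i}} \OK \cdot t^{-w(\mathbf{i})} \Psi^{\mathbf{i}},
\]
with integers $w(\mathbf{i})$ read off from the valuation shifts produced by the $\Psi_j$.

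Next, since $\ALK$ is a local $\OK$-order of $\OK$-rank $q = [L:K]$, $\OL$ is $\ALK$-free if and only if there is a single element $\rho \in \OL$ such that $\{t^{-w(\mathbf{i})}\Psi^{\mathbf{i}}\rho\}_{\mathbf{i}}$ is an $\OK$-basis of $\OL$. By the valuation criterion for modules over a complete discrete valuation ring, this is equivalent to the $q$ integers $v_L(t^{-w(\mathbf{i})} \Psi^{\mathbf{i}} \rho)$ representing all residues modulo $q$ exactly once, each being the minimal valuation in its residue class within $\OL$. After reparametrizing residue classes via multiplication by $h = h_c$ with $c = r(b)$, this becomes a statement about base-$p$ digit sums of integer products $hu$ and $hv$ with $u, v \ge 1$. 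I would then show that the precise condition for such a $\rho$ to exist is the ``no-carry'' requirement $(hu)_{(s)} + (hv)_{(s)} < p-1$ for some $s$ whenever $u + v < b$, that is, $r(b) \in \scrS(q)$.

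The main obstacle is ensuring that the ``error terms'' $\epsilon_i$ in the definition of near one-dimensional extensions do not disturb any of the scaffold-based valuation estimates used above. The lower bound on $v_K(\epsilon_i)$ in the definition is designed precisely so that $v_L(\Psi^{\mathbf{i}} \lambda)$ remains an equality for every multi-index $\mathbf{i}$ relevant to the computation of $\ALK$; verifying this requires tracking the errors through iterated $\Psi$ applications and confirming that they always lie strictly below the leading valuations. Once that precision is secured, the digit argument proceeds in parallel with the Kummer-case analysis in \cite{NB-M}, which accounts for the coincidence with Miyata's characteristic-zero result highlighted in the abstract.
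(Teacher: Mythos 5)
Your overall strategy --- use the scaffold monomials $\Psi^{(a)}$ to produce an explicit $\OK$-basis $\{t^{-w_j}\Psi^{(j)}\}$ of $\ALK$ and then convert freeness into a base-$p$ digit condition --- is the same as the paper's, and the first half of your sketch (the description of $\ALK$) is sound. The genuine gap is in your freeness criterion. You assert that $\OL$ is free over $\ALK$ if and only if some $\rho$ makes the $q$ valuations $v_L\bigl(t^{-w(\mathbf{i})}\Psi^{\mathbf{i}}\rho\bigr)$ a complete set of residues modulo $q$, each minimal in its class, calling this ``the valuation criterion.'' That equivalence only holds in the sufficiency direction: a family of $q$ elements of $\OL$ whose valuations form such a minimal residue system is indeed an $\OK$-basis, but an $\OK$-basis need not have distinct valuations at all (e.g.\ $\{1,\,1+\pi_L\}$ is as good a basis as $\{1,\pi_L\}$ in a quadratic totally ramified extension with uniformizer $\pi_L$). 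Moreover an arbitrary free generator $\rho$ of $\OL$ over $\ALK$ need not satisfy the scaffold's valuation criterion $v_L(\rho)\equiv r(b)\pmod q$, so you cannot even invoke Theorem \ref{scaffold} to compute those valuations. This necessity direction is precisely where the paper does work: in Theorem \ref{free-assoc} a general generator $\eta=\sum_r x_r\rho_r$ is expanded, the matrix expressing the $t^{-w_i}\Psi^{(i)}\eta$ in the basis $\{\rho_j\}$ is shown to be upper triangular and invertible over $\OK$, and equating the diagonal entries to units forces $w_j=d_j-d_0$ for all $j$. Your sketch contains no substitute for this step.

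Two further points. First, the identification of the resulting numerical condition with $r(b)\in\scrS(q)$ is not a routine reparametrization: the paper converts $w_j=d_j-d_0$ into Miyata's inequality $r(-b)+r(-ib)-r(-hb)>0$ for $0\le h\le i\le j<q$ with $i+j=q-1+h$ and ${i \choose h}\not\equiv 0\pmod p$ (Lemma \ref{lem}), and then cites \cite[Theorem 1.8]{NB-M} for the equivalence of that inequality with membership in $\scrS(q)$, whose defining condition ranges over all $u,v\ge 1$ with $u+v<b$ (and $b$ may greatly exceed $q$). You assert this bridge ``would be shown'' but give no argument; either reproduce that combinatorics or cite it. Second, your concern about the error terms $\epsilon_i$ is misplaced: Theorem \ref{scaffold} already asserts exact equalities $v_L(\Psi^{(a)}\rho)=v_L(\rho)+a\,\bmax$ for every $0\le a<q$ and every $\rho$ with $v_L(\rho)\equiv r(b)\pmod q$, so all tracking of the $\epsilon_i$ is internal to the proof of that theorem in \cite{elder}; nothing about the error terms enters the module-theoretic argument, and the paper explicitly needs nothing beyond the existence of the $\Psi_i$.
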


The definition for $\scrS(q)$ is however difficult to digest. A
simpler condition that focuses on the congruence class $r(b)$,
containing all the (lower) ramification numbers of $L/K$ can be used
to replace the condition involving $\scrS(q)$ , but at the expense of
a weaker statement:
\begin{itemize}
\item[(i)] Let $q=p^{n+1}$ with $n \leq 1$. Then $\OL$ is free over $\ALK$
  if and only if $r(b)$ divides $q-1$. 
\item[(ii)] Let $q=p^{n+1}$ with $n \geq 2$. Then $\OL$ is free over $\ALK$
 if $r(b)$ divides $p^d-1$ for some $d \in \{1, \ldots, n+1\}$. 
\end{itemize} 
See \cite{NB-M}, and note that the converse of (ii) does not always hold
\cite[\S3]{NB-M}.
This and \cite[Lemmas 5.1, 5.2]{elder} lead to the
corollaries:
\begin{corollary} \label{AS-q}
If $k$ contains the field $\F_q$ of
$q=p^{n+1}$ elements with $n\geq 0$, $K=k((t))$, and
$L=K(y)$ where 
\begin{equation} \label{yq}
    y^q-y=\beta \in K \mbox{ with } v_K(\beta)=-b<0, \quad
    (b,p)=1,
\end{equation}
then $L/K$ is a totally ramified elementary abelian extension of
degree $q$, with unique ramification break $b$. And if $r(b)$ denotes the
least non-negative residue of $b$ modulo $q$, then  $\OL$ is free over its associated order $\ALK$ if and only if $r(b)
\in \scrS(q)$. Thus
\begin{itemize}
\item[(i)] If $n \leq 1$ then $\OL$ is free over $\ALK$ if and only if
  $r(b)$ divides $q-1$. 
\item[(ii)] If $n \geq 2$ then $\OL$ is free over $\ALK$ if $r(b)$
  divides $p^d-1$ for some $d \in \{1,\ldots, n+1\}$. 
\end{itemize}  
\end{corollary}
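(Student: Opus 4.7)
The plan is to reduce Corollary \ref{AS-q} to Theorem \ref{assoc-main} combined with the description of $\scrS(q)$ from \cite{NB-M} recalled just before the corollary. The only step with real content is to verify that $L=K(y)$ falls in the class of one-dimensional elementary abelian extensions, which is exactly what \cite[Lemmas 5.1, 5.2]{elder} accomplish.

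First I would exhibit $L/K$ as a one-dimensional elementary abelian extension of degree $q$ with a single ramification break $b$. The hypothesis $\F_q\subseteq k$ lets one split $y^q-y=\beta$ into $n+1$ Artin--Schreier equations. Concretely, choose an $\F_p$-basis $\omega_0,\ldots,\omega_n$ of $\F_q$ with $\mathrm{Tr}_{\F_q/\F_p}(\omega_0)=1$ and $\mathrm{Tr}_{\F_q/\F_p}(\omega_i)=0$ for $i\geq 1$, and let $\lambda_0=1,\lambda_1,\ldots,\lambda_n$ be the trace-dual basis. Setting $x_i=\sum_{j=0}^{n}\lambda_i^{p^j}y^{p^j}$, Frobenius together with $\lambda_i\in\F_q$ and $y^q=y+\beta$ gives
\[
x_i^p=\sum_{j=0}^{n}\lambda_i^{p^{j+1}}y^{p^{j+1}}=\sum_{j=1}^{n}\lambda_i^{p^j}y^{p^j}+\lambda_i(y+\beta)=x_i+\lambda_i\beta,
\]
so $x_i^p-x_i=\lambda_i\beta$. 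Since $k$ is perfect, each $\lambda_i$ has a unique $p^n$-th root $\Omega_i\in k\subseteq K$; one checks $\Omega_0=1$ and that $\Omega_0,\ldots,\Omega_n$ are $\F_p$-linearly independent in $k=\OK/\gP_K$. The equations $x_i^p-x_i=\Omega_i^{p^n}\beta$ (zero error terms) together with $v_K(\Omega_i)=0$ confirm the one-dimensional elementary abelian structure, and \eqref{ramnos} collapses to $b_{(i)}=b$.

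Applying Theorem \ref{assoc-main} to this reduction then yields the first assertion: $\OL$ is free over $\ALK$ if and only if $r(b)\in\scrS(q)$. Parts (i) and (ii) follow at once by quoting the equivalence (for $n\leq 1$) and implication (for $n\geq 2$) between $r(b)\in\scrS(q)$ and the divisibility conditions on $r(b)$ proved in \cite{NB-M} and recalled before the corollary. The only step requiring genuine work is the reduction in the first paragraph, and the essential role of the hypothesis $\F_q\subseteq k$ there is to keep the dual-basis construction inside $k$, so that $v_K(\Omega_i)=0$ and the $\F_p$-independence conditions are satisfied in $\OK/\gP_K$; no other obstacle arises.
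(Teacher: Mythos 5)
Your proof is correct and is essentially the paper's own argument: the paper deduces the corollary by citing \cite[Lemmas 5.1, 5.2]{elder} for exactly the reduction you carry out (splitting $y^q-y=\beta$ via a trace-dual basis of $\F_q$ into equations $x_i^p-x_i=\Omega_i^{p^n}\beta$ with $v_K(\Omega_i)=0$, so that (\ref{ramnos}) collapses to the single break $b$), and then applies Theorem \ref{assoc-main} together with the facts from \cite{NB-M} recalled just before the corollary for parts (i) and (ii). The only points your explicit version leaves tacit --- that $y=\sum_i\omega_i x_i$, so indeed $L=K(x_0,\ldots,x_n)$, and that $[L:K]=q$ because any nonzero combination $\bigl(\sum_i c_i\lambda_i\bigr)\beta$ has valuation $-b$, negative and prime to $p$, hence cannot equal $z^p-z$ for any $z\in K$ --- are precisely what the cited lemmas of \cite{elder} supply, so nothing essential is missing.
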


\begin{corollary} \label{biquad}
If $k$ has characteristic 2 and $K=k((t))$ and $L$ is any totally ramified
biquadratic extension of $K$ (i.e.~$\Gal(L/K) \cong C_2 \times
C_2)$, then $\OL$ is free over $\A$. 
\end{corollary}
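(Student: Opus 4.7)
The plan is to deduce this from Theorem \ref{assoc-main} via the simpler characterisation (i) stated immediately after that theorem, together with \cite[Lemmas 5.1, 5.2]{elder}. In the biquadratic situation we have $p=2$, $n=1$, $q=p^{n+1}=4$, so (i) is applicable and reduces the question to a one-line arithmetic check.

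First I would invoke \cite[Lemmas 5.1, 5.2]{elder}, which show that any totally ramified elementary abelian $p$-extension whose ramification data are sufficiently generic can, after a change of Artin--Schreier generators, be presented in the near one-dimensional form required by the definition. For $p=2$ and $q=4$ there are only two ramification breaks to control, and these lemmas guarantee that \emph{every} totally ramified biquadratic extension $L/K$ in characteristic $2$ arises as a near one-dimensional elementary abelian extension. Consequently Theorem \ref{assoc-main} applies to $L/K$, so the freeness of $\OL$ over $\ALK$ is equivalent to $r(b) \in \scrS(4)$.

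Second, since $n=1 \leq 1$, statement (i) replaces the condition $r(b)\in\scrS(4)$ by the much simpler requirement that $r(b)$ divide $q-1=3$. Now $(b,p)=(b,2)=1$ forces $b$ to be odd, hence $r(b)\in\{1,3\}$. Both values divide $3$, so the condition is automatically satisfied, and $\OL$ is free over $\ALK$.

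The only substantive point is the first step: confirming that the class of near one-dimensional elementary abelian extensions is genuinely broad enough to capture \emph{every} totally ramified $C_2\times C_2$-extension in characteristic $2$, as opposed to a proper subclass. This is handled by the cited lemmas of \cite{elder}, which explicitly normalize the generators; once that reduction is in place, the numerical verification is immediate and no further obstacle remains.
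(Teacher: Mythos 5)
Your proposal is correct and follows essentially the same route the paper intends: it cites \cite[Lemmas 5.1, 5.2]{elder} (for the biquadratic case it is Lemma 5.2 that shows every totally ramified $C_2\times C_2$-extension in characteristic $2$ is near one-dimensional), then applies Theorem \ref{assoc-main} together with criterion (i), and since $b$ is odd one has $r(b)\in\{1,3\}$, both of which divide $q-1=3$. This matches the paper's (unwritten) deduction, so nothing further is needed.
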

Corollary \ref{biquad} should be compared with the more complicated
situation in characteristic zero \cite{Ma}.

\subsection{Miyata's result in characteristic zero}
Let $F$ be a finite extension of the $p$-adic field $\Q_p$ that
contains a primitive $q=p^{n+1}$ root of unity.  Again, for any
finite Galois extension $E/F$ with Galois group $G$, we may consider
the valuation ring $\OE$ as a module over its associated order $\AEF$.
A nice, natural class of extensions consists of those
totally ramified cyclic Kummer extensions $F(\alpha)$ of degree $q$
with
\begin{equation} \label{miyata-type}
  \alpha^q =a \in F \mbox{ with } v_K(a)=t>0, \quad (t,p)=1.
\end{equation}
These extensions have been studied in a series of papers by Miyata
\cite{M1, M2, M3}.  In particular, Miyata gave a necessary and
sufficient condition in terms of $b\equiv -t\pmod{q}$ for $\OE$ to be
free over $\AEF$. This condition can be expressed in terms of
$\scrS(q)$. See \cite{NB-M}.

\begin{theorem}[Miyata] 
Let $E/F$ be as above, satisfying {\rm (\ref{miyata-type})}, then
$\OE$ is free over its associated order $\AEF$ if and only if $r(b)
\in \scrS(q)$.
\end{theorem}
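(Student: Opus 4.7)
The plan is to exploit the Galois scaffold for $L/K$ (Theorem \ref{scaffold}) to translate freeness into a base-$p$ combinatorial condition on the single integer $r(b)$, and to recognize this condition as exactly the defining property of $\scrS(q)$. First I would use the scaffold to produce elements $\Psi_0, \ldots, \Psi_n \in K[G]$ each of which acts on $\OL$ by shifting valuation by a prescribed amount tied to $b$ and the $\Omega_s$. The products $\Psi^{\vec{i}} = \prod_s \Psi_s^{i_s}$ with $\vec{i} \in \{0,\ldots,p-1\}^{n+1}$ form an $L$-basis of $K[G]$, and an $\OK$-basis of $\ALK$ is obtained by scaling each $\Psi^{\vec{i}}$ by the smallest power $t^{c(\vec{i})}$ that keeps it in $\End_{\OK}(\OL)$, with $c(\vec{i})$ computable from $b$ and the $v_K(\Omega_s)$.

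Next, the scaffold furnishes an $\OK$-basis of $\OL$ consisting of elements of prescribed valuation, one in each residue class modulo $q$. I would pick a candidate free generator $\rho \in \OL$ whose valuation lies in the residue class of $-b$ modulo $q$ (the ``extremal'' class in the scaffold basis), and ask whether $\{t^{c(\vec{i})} \Psi^{\vec{i}} \rho : \vec{i}\}$ is an $\OK$-basis of $\OL$. Since both sides are $\OK$-free of rank $q$, freeness reduces to an equality of valuation profiles: one must check that the $q$ valuations $v_L(t^{c(\vec{i})} \Psi^{\vec{i}} \rho)$ are pairwise distinct and coincide with the valuations of the standard scaffold basis.

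This equality fails precisely when some product $\Psi^{\vec{i}} \rho$ has valuation strictly larger than the scaffold forces, and tracking the leading terms shows that the unwanted jump occurs exactly when, for certain pairs $u,v \geq 1$ with $u+v < b$, the digit inequality $(hu)_{(s)} + (hv)_{(s)} \geq p-1$ holds for every $s \in \{0,\ldots,n\}$, where $h = h_{r(b)}$ is the modular inverse in the definition of $\scrS(q)$. Since $\scrS(q)$ was built to record precisely the absence of such a uniform carry, freeness of $\OL$ over $\ALK$ becomes equivalent to $r(b) \in \scrS(q)$. This is also the mechanism underlying the coincidence with Miyata's result via \cite{NB-M}.

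I expect the main obstacle to be twofold. On the one hand, the ``error terms'' $\epsilon_i$ distinguishing near one-dimensional from one-dimensional extensions mean that each $\Psi_s$ acts only approximately as a clean valuation shift, so one must verify that the hypothesis on $v_K(\epsilon_i)$ is strong enough to suppress these perturbations in every product $\Psi^{\vec{i}} \rho$; this is the technical heart of the scaffold estimate. On the other hand, the converse direction, that $r(b) \notin \scrS(q)$ forces $\OL$ to be non-free, requires ruling out every potential generator rather than just the natural candidate $\rho$; this is typically handled by reducing modulo the maximal ideal and exhibiting a genuine cokernel over the residue field $k$, so that no change of generator can rescue freeness.
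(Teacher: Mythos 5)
Your proposal proves the wrong statement. The theorem in question is Miyata's theorem: $E/F$ is a totally ramified \emph{cyclic} Kummer extension of degree $q=p^{n+1}$ of a finite extension $F$ of $\Q_p$ containing a primitive $q$-th root of unity, satisfying (\ref{miyata-type}), and the claim is that $\OE$ is free over $\AEF$ if and only if $r(b)\in\scrS(q)$. Your argument instead works with the characteristic $p$ near one-dimensional elementary abelian extension $L/K$ and its Galois scaffold; that is (a sketch of) Theorem \ref{assoc-main}, not the stated theorem. The scaffold machinery is not available for $E/F$: Theorem \ref{scaffold} is proved only for elementary abelian extensions in characteristic $p$, and its key structural feature $\Psi_i^p=0$ rests on $(\sigma-1)^p=\sigma^p-1=0$ in characteristic $p$; for a cyclic group of order $q=p^{n+1}$ acting in characteristic zero there are no such nilpotent generators of the augmentation ideal, so neither the basis $\{\Psi^{(a)}\}$ nor the valuation formula $v_L(\Psi^{(a)}\rho)=v_L(\rho)+a\bmax$ can be invoked, and your reduction of freeness to the digit condition defining $\scrS(q)$ does not get off the ground in the setting of the statement.

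Note also that the paper does not prove this theorem: it is quoted from Miyata's papers \cite{M1,M2,M3}, with the reformulation of his numerical criterion as $r(b)\in\scrS(q)$ taken from \cite{NB-M}. The logical flow is the reverse of what your sketch suggests: the paper proves Theorem \ref{assoc-main} via Lemma \ref{lem}, which translates freeness of $\OL$ over $\ALK$ into the inequality (\ref{miy-cond}), observes that this is exactly Miyata's criterion, and presents the agreement as an analogy between the two families, not as a derivation of one result from the other. A genuine proof of the stated theorem has to be carried out in the characteristic-zero cyclic Kummer setting --- computing $\AEF$ and the freeness criterion for the extensions (\ref{miyata-type}) as Miyata does, and then applying the purely combinatorial equivalence between (\ref{miy-cond}) and membership in $\scrS(q)$ established in \cite{NB-M}. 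Whatever the merits of your scaffold-based outline for Theorem \ref{assoc-main} (where the paper's actual argument indeed follows a route close to the one you describe), it leaves the present statement untouched.
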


This suggests that we should regard near one-dimensional elementary
abelian extensions in characteristic $p$ as somehow analogous to
Miyata's cyclic characteristic $0$ extensions.  In particular, it
seems natural to regard the families of extensions in Corollary
\ref{AS-q} and Theorem 1.4 (both defined by a single equation) to be
analogous.  If this analogy has merit, then Theorem \ref{assoc-main}
suggests that there should be a larger family of Kummer extensions,
``deformations'' of Miyata's family, for which, in some appropriate
sense, Miyata's criterion holds.

\section{Proof of Main Theorem and its Corollaries}
Recall that $L/K$ is an near one-dimensional elementary abelian
extension of characteristic $p$ local fields.

\subsection{Galois scaffold}
The definition of Galois scaffold in \cite{elder} is clarified in
\cite{elder:sharp-crit}. There are two
ingredients: A valuation criterion for a normal basis generator and a
generating set for a particularly nice $K$-basis of the group
algebra $K[G]$.

In our setting, where
$L/K$ is a near one-dimensional elementary abelian extension of
degree $q=p^{n+1}$, the valuation criterion is
$v_L(\rho)\equiv r(b)\bmod q$, which means that if $v_L(\rho)\equiv
r(b)\bmod q$ then $L=K[G]\rho$.

The second ingredient is a generating set of $\log_p|G|=n+1$ elements
$\{\Psi_i\}$ from the augmentation ideal $(\sigma-1:\sigma\in G)$ of
$K[G]$ that satisfy a regularity condition, namely
$v_L(\Psi_i^j\rho)-v_L(\rho)=j\cdot( (v_L(\Psi_i\rho')-v_L(\rho'))$
for $0\leq j<p$, and for all $\rho,\rho'\in L$ that satisfy the
valuation criterion, $v_L(\rho),v_n(\rho')\equiv r(b)\bmod q$. And
moreover, if we define $\Psi^{(a)} = \prod_{s=0}^n \Psi_s^{a_{(s)}}$
for $a=\sum_s a_{(s)} p^s$, then $\{v_L(\Psi^{(a)}\rho):0\leq a<q\}$
is a complete set of residues modulo $q$.

The main result of \cite{elder}, restated here as Theorem
\ref{scaffold},is  that a Galois scaffold exists for $L/K$.

\begin{theorem} \label{scaffold}
Let $L/K$ be a near one-dimensional elementary abelian elementary
abelian extension of degree $q=p^{n+1}$, let $G=\Gal(L/K)$ and
let 
$\bmax$ be the largest lower ramification number of $L/K$.
Then for
$0 \leq i \leq n$ there exist elements $\Psi_i$ in the augmentation
ideal $(\sigma-1:\sigma\in G)$ of $K[G]$ such
that $\Psi_i^p=0$ and, for any $\rho \in L$ with $v_L(\rho) \equiv
\bmax\equiv r(b) \pmod{q}$ and any $0\leq a<q$, we have
$$ v_L \left( \Psi^{(a)}
 \rho \right) =v_L \left( \prod_{i=0}^n \Psi_i^{a_{(i)}}
 \cdot\rho \right) =
   v_L(\rho)+\sum_{i=0}^n a_{(i)} p^i\bmax  =v_L(\rho)+a\cdot\bmax. $$
\end{theorem}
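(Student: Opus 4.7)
The statement is essentially the main result of \cite{elder}; my plan is to outline the construction given there.

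\emph{Setup.} Via Artin--Schreier theory, I would introduce the natural $\F_p$-basis $\sigma_0,\ldots,\sigma_n$ of $G$ by declaring $\sigma_i(x_j)=x_j+\delta_{ij}$. The near-one-dimensional hypotheses on the $\Omega_i$ and $\epsilon_i$ ensure that these $\sigma_i$ are well-defined Galois automorphisms and generate $G$. Each $\sigma_i-1$ lies in the augmentation ideal and satisfies $(\sigma_i-1)^p=\sigma_i^p-1=0$. A direct computation with the Artin--Schreier relations then shows that $L/K$ is totally ramified, with lower ramification numbers exactly the distinct elements of $\{b_{(i)}\}$ in (\ref{ramnos}); hence $\bmax=b_{(n)}$.

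\emph{Construction of the $\Psi_i$.} The naive choice $\Psi_i=\sigma_i-1$ does not give the required valuation shift, because $\sigma_i-1$ acts differently on each $x_j$. Instead, one defines
$$\Psi_i = c_i(\sigma_i-1) + \text{(corrections involving } \sigma_j \text{ for } j<i\text{)},$$
with $c_i\in K^\times$ chosen so that on the ``dominant'' Artin--Schreier piece $\Omega_i^{p^n}\beta$, the element $c_i(\sigma_i-1)$ produces valuation shift exactly $p^i\bmax$. The $\F_p$-linear independence clause in the definition of (near) one-dimensional is exactly what permits the corrections to be chosen (when several $v_K(\Omega_j)$ coincide) so that different $\Psi_i$ produce different leading shifts, thereby allowing the full set of residues modulo $q$ to be attained by the $\Psi^{(a)}\rho$.

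\emph{Verification and main obstacle.} I would then verify the valuation identity inductively. For each $i$ and each $0\le j<p$, a computation using the Artin--Schreier relation on $x_i$ yields $v_L(\Psi_i^j\rho)=v_L(\rho)+jp^i\bmax$ for any $\rho$ with $v_L(\rho)\equiv r(b)\pmod q$. Since $G$ is abelian the $\Psi_i$ commute, and $v_L(\Psi^{(a)}\rho)=v_L(\rho)+a\bmax$ follows by multiplying the individual shifts (noting $(\bmax,p)=1$ ensures no collisions mod $q$). The hypothesis on the $\epsilon_i$ is precisely strong enough to ensure that each $\epsilon_i$ contributes only to higher-order terms, reducing the analysis to the one-dimensional case ($\epsilon_i=0$). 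The main obstacle is the careful bookkeeping of lower-order cross-terms $(\sigma_i-1)x_j$ and $(\sigma_i-1)\epsilon_j$ under iterated composition, uniformly across every $\rho$ satisfying the valuation congruence; this is exactly where the technical bounds in the definition of ``near one-dimensional'' are needed to close the induction.
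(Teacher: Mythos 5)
Your opening sentence already coincides with the paper's entire proof: Theorem \ref{scaffold} is a restatement of \cite[Theorem 1.1]{elder}, and the paper's proof consists solely of the instruction to take $\Psi_i=\alpha_{n-i}(\Theta_{(i)}-1)$ from that result. So as a proof-by-citation your proposal matches the paper. (A minor discrepancy: the $\Psi_i$ actually used are $K$-scalar multiples of $\Theta_{(i)}-1$ for suitable elements $\Theta_{(i)}$ built multiplicatively from the $\sigma_j$, not $K$-linear combinations $c_i(\sigma_i-1)+\sum_{j<i}c_{ij}(\sigma_j-1)$; to leading order in the augmentation ideal these agree, so this is largely cosmetic.)

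Read instead as a sketch of the argument itself, your verification has a genuine gap exactly where you pass from single operators to monomials. The statements you propose to prove, $v_L(\Psi_i^j\rho)=v_L(\rho)+jp^i\bmax$, are asserted only for $\rho$ with $v_L(\rho)\equiv r(b)\pmod q$, and the intermediate elements $\Psi_i^j\rho$ never lie in that class: since $(\bmax,p)=1$ and $0<jp^i<q$, we have $v_L(\Psi_i^j\rho)\equiv r(b)+jp^i\bmax\not\equiv r(b)\pmod q$. Hence commutativity of the $\Psi_i$ plus ``multiplying the individual shifts'' proves nothing about $\Psi^{(a)}\rho$ for mixed monomials (indeed even $\Psi_i^2\rho$ cannot be reached by iterating the $j=1$ case); there is no statement available to apply to $\Psi_i^j\rho$. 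This is not the same difficulty as the bookkeeping of the cross-terms $(\sigma_i-1)x_j$ and $(\sigma_i-1)\epsilon_j$ that you flag: any proof of the monomial formula must control the action of each $\Psi_i$ on elements whose valuations run over \emph{all} residue classes modulo $q$ that occur along the way (equivalently, on an explicit basis of $L$ whose valuations exhaust the classes mod $q$), which is a strictly stronger input than the criterion-restricted shifts you set up, and it is precisely this uniform control that \cite[Theorem 1.1]{elder} supplies. As written, your induction cannot close without importing that stronger statement.
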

\begin{proof}
In \cite[Theorem 1.1]{elder}, take $\Psi_i=
\alpha_{n-i}(\Theta_{(i)}-1)$.
\end{proof}

In the next two sections, we describe the associated order $\ALK$ in terms of
these $\Psi_i$, and show that $\OL$ is free over $\ALK$ if and only if
$r(b) \in \scrS(q)$. To do so, we require nothing more than the
existence of the $\Psi_i$ described in Theorem \ref{scaffold}.

\subsection{Associated order}
For the fixed prime power $q=p^{n+1}$, there is a partial
order $\preceq$ on the integers $x \geq 0$  defined as follows.  Recall the
$p$-adic expansion of an
integer: $ x= \sum_{s=0}^\infty x_{(s)} p^s$ with $x_{(s)}\in
\{0,\ldots,p-1\}$. Define
$$ x \preceq y \LRA  x_{(s)} \leq y_{(s)} \mbox{ for } 0 \leq s \leq
  n. $$ 
Write $y \succeq x$ for $x \preceq y$.  Note that $\preceq$
  does not respect addition: if $0 \leq
x, y \leq q-1$ then $x \preceq q-1-y$ is equivalent to $y \preceq
q-1-x$ (both say that no carries occur in the base-$p$ addition of $x$
and $y$) but these are not equivalent to $x+y \preceq q-1$ (which
always holds).

Recall that for $a=\sum_s a_{(s)} p^s$, we have defined
$ \Psi^{(a)} = \prod_{s=0}^n \Psi_s^{a_{(s)}}$.
Since $\Psi_s^p=0$ for all $s$ we have
\begin{equation} \label{Psi-mult}
\Psi^{(a)} \Psi^{(j)} = \begin{cases} 
      \Psi^{(a+j)} & \mbox{if }a \preceq q-1-j; \\
      0 & \mbox{otherwise}.\end{cases}
\end{equation}

Now set
$ d_a = \lfloor (1+a)\bmax/q\rfloor  $
for $0 \leq a \leq q-1$. This means that $(1+a)\bmax=d_aq+r((1+a)\bmax)$ 
with $0\leq r((1+a)\bmax)<q$.
Let $\rho_*\in L$ be any element with valuation
$v_L(\rho_*)=r(b)=r(\bmax)$. 
Recall $v_K(t)=1$, so $v_L(t)=q$.
Set $\rho=t^{d_0}\rho_*$, so $v_L(\rho)=\bmax$,
and set
$$ \rho_a = t^{-d_a} \Psi^{(a)} \cdot \rho. $$
This means that based upon
Theorem \ref{scaffold}, we have
$ v_L(\rho_a) = -q d_a + v_L(\rho) + a \bmax = -q d_a +(1+a)\bmax=r(
(1+a)\bmax)$.
Using (\ref{Psi-mult}), we also have
\begin{equation} \label{Psi-rho}
\Psi^{(j)} \cdot \rho_a = \begin{cases} 
    t^{d_{a+j}-d_a} \rho_{j+a} & \mbox{if }a  \preceq q-1-j\\
    0 & \mbox{otherwise}.\end{cases}
\end{equation}

\begin{lemma}
$\{ \rho_a \}_{0 \leq a \leq q-1}$ is an $\OK$-basis for $\OL$. Moreover
  $\{ \Psi^{(a)} \}_{0 \leq a \leq q-1}$ is a $K$-basis for the group
  algebra $K[G]$, and $\rho$ generates a normal basis for the
  extension $L/K$.  
\end{lemma}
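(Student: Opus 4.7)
The key observation driving everything is that the largest ramification number $\bmax$ is coprime to $q$. Indeed, from the formula $b_{(i)} = b + p^n\sum_{j=1}^i p^j m_j$, every term in the sum is divisible by $p^{n+1}=q$, so $\bmax \equiv b \pmod{q}$ and $(b,p)=1$ forces $(\bmax,q)=1$. Consequently multiplication by $\bmax$ permutes $\Z/q\Z$, and as $a$ ranges over $\{0,1,\ldots,q-1\}$ the values $(1+a)\bmax \pmod{q}$ run through every residue class exactly once.

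For the first claim, I would simply unpack the definition of $\rho_a$. Theorem \ref{scaffold} gives $v_L(\Psi^{(a)}\rho) = \bmax + a\bmax = (1+a)\bmax$, and the choice $d_a = \lfloor (1+a)\bmax/q\rfloor$ together with $v_L(t)=q$ yields $v_L(\rho_a) = r((1+a)\bmax) \in \{0,1,\ldots,q-1\}$. By the previous paragraph these $q$ valuations are pairwise distinct and comprise a complete set of residues modulo $q = e(L/K)$. A standard fact in local field theory then gives that $\{\rho_a\}_{0\le a\le q-1}$ is an $\OK$-basis of $\OL$.

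For the second claim, note that $\{\Psi^{(a)}\}_{0\le a\le q-1}$ has exactly $q = \dim_K K[G]$ elements, so it suffices to verify $K$-linear independence. Suppose $\sum_a c_a \Psi^{(a)} = 0$ with $c_a \in K$, not all zero; applying the operator to $\rho$ gives $\sum_a c_a (\Psi^{(a)}\rho) = 0$. But by Theorem \ref{scaffold} the elements $\Psi^{(a)}\rho$ have pairwise distinct valuations in $L$, so they are $K$-linearly independent, forcing all $c_a=0$, a contradiction. Hence $\{\Psi^{(a)}\}$ is a $K$-basis of $K[G]$.

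Finally, the normal basis statement follows immediately: the map $K[G] \to L$, $\alpha \mapsto \alpha\rho$, is $K$-linear, and its image contains the $K$-linearly independent set $\{\Psi^{(a)}\rho\}_{0\le a \le q-1}$ of cardinality $q = [L:K]$, so the map is surjective (hence bijective). Thus $K[G]\rho = L$, which means $\rho$ generates a normal basis for $L/K$. There is no real obstacle here; everything reduces to the valuation formula of Theorem \ref{scaffold} combined with the coprimality $(\bmax,q)=1$.
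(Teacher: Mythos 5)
Your proof is correct and follows essentially the same route as the paper: both rest on the valuation formula of Theorem \ref{scaffold} together with $(\bmax,q)=1$, which makes the valuations $r((1+a)\bmax)$ a complete set of residues modulo $q$, giving the $\OK$-basis of $\OL$, and then a dimension count (you via linear independence of the $\Psi^{(a)}\rho$, the paper via their spanning $L$) yields the basis of $K[G]$ and the normal basis generated by $\rho$.
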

\begin{proof}
The first assertion follows from the fact that since $p\nmid\bmax$,
$v_L(\rho_a)=r( (1+a)\bmax)$ takes all values in $\{0,\ldots,q-1\}$ as
$a$ does.  From the definition of the $\rho_a$, we then deduce that
the elements $\Psi^{(a)}\cdot \rho$ span $L$ over $K$. Comparing
dimensions, it follows that $\rho$ generates a normal basis, and that
the $\Psi^{(a)}$ form a $K$-basis for $K[G]$.
\end{proof}

\subsection{Freeness over associated order}
Define 
$$ w_j = \min\{d_{a+j}-d_a \mid 0 \leq a \leq q-1, a \preceq
q-1-j \}.$$
Then $w_0=0$ and (taking $a=0$), we have $w_j \leq d_j-d_0$ for all $j$.

\begin{theorem}  \label{free-assoc}
Let $L/K$ be any near one-dimensional elementary abelian extension
$L/K$ of degree $q=p^f$, with largest ramification number $\bmax$, and
let $\rho_*\in L$ be any element with $v_L(\rho_*)=r(\bmax)$. The
associated order $\ALK$ of $\OL$ has $\OK$-basis $\{ t^{-w_j}
\Psi^{(j)} \}_{0 \leq j \leq q-1}$. Moreover $\OL$ is a free module
over $\ALK$ if and only $w_j=d_j-d_0$ for all $j$, and in this case
$\rho_*$ is a free generator of $\OL$ over $\ALK$.
\end{theorem}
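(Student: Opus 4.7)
The plan is to use the scaffold to express both $\ALK$ and $\OL$ in compatible $\OK$-bases, and then to reduce the freeness question to invertibility of a lower-triangular matrix over $\OK$. In particular, I would first compute $\ALK$ and then analyse the multiplication map $\alpha \mapsto \alpha \rho'$ for a candidate generator $\rho' \in \OL$.

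\textbf{The associated order.} Expand $\alpha = \sum_{j=0}^{q-1} c_j \Psi^{(j)} \in K[G]$ with $c_j \in K$. From (\ref{Psi-rho}),
\begin{equation*}
 \alpha\cdot\rho_a = \sum_{j:\, a\preceq q-1-j} c_j\, t^{d_{a+j}-d_a}\,\rho_{a+j},
\end{equation*}
and since for fixed $a$ the admissible indices $a+j$ are all distinct while $\{\rho_b\}$ is an $\OK$-basis of $\OL$, the inclusion $\alpha\OL \subseteq \OL$ translates to $v_K(c_j) \geq d_a - d_{a+j}$ for every pair $(a,j)$ with $a\preceq q-1-j$. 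Maximising over $a$ yields $v_K(c_j) \geq -w_j$, so $\ALK$ has the claimed $\OK$-basis $\{t^{-w_j}\Psi^{(j)}\}$.

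\textbf{Matrix of multiplication and invertibility.} For any $\rho' = \sum_a u_a\rho_a \in \OL$ (with uniquely determined $u_a \in \OK$), the map $\phi\colon \ALK\to\OL$, $\alpha\mapsto\alpha\rho'$, is an $\OK$-linear map of two free $\OK$-modules of rank $q$. Applying (\ref{Psi-rho}) once more,
\begin{equation*}
 t^{-w_j}\Psi^{(j)}\cdot\rho' = \sum_{c \succeq j} u_{c-j}\, t^{d_c - d_{c-j} - w_j}\,\rho_c ,
\end{equation*}
so in the bases $\{t^{-w_j}\Psi^{(j)}\}$ and $\{\rho_c\}$ the $(c,j)$-entry of the matrix of $\phi$ vanishes unless $c \succeq j$, with diagonal entry $u_0\, t^{d_j - d_0 - w_j}$ in column $j$. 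Ordering $\{0,\dots,q-1\}$ by any linear extension of $\preceq$ makes this matrix lower triangular, so $\phi$ is an $\OK$-isomorphism iff each diagonal entry is a unit of $\OK$. Since $u_0\in\OK$ and $w_j \leq d_j - d_0$ (the latter by taking $a=0$ in the definition of $w_j$), this forces $v_K(u_0) = 0$ and $w_j = d_j - d_0$ for every $j$. Consequently, if $\OL$ is free over $\ALK$ then $w_j = d_j - d_0$ for all $j$; conversely, once this equality holds, $\rho_* = \rho_0$ has $u_0 = 1$, so $\phi$ is an $\OK$-iso and thus (being $\ALK$-linear by construction) an $\ALK$-iso, exhibiting $\rho_*$ as a free generator.

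\textbf{Main obstacle.} The substantive input is the lower-triangular shape of the matrix, which depends entirely on the combinatorial rule in (\ref{Psi-rho}): $\Psi^{(j)}\rho_a$ vanishes unless $a+j$ incurs no base-$p$ carries, and in the nonvanishing case is a scalar multiple of the single basis vector $\rho_{a+j}$. Once this structure is in hand, the remainder is linear algebra over the DVR $\OK$.
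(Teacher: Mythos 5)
Your proposal is correct and follows essentially the same route as the paper: the same chain of equivalences from (\ref{Psi-rho}) identifies $\ALK$, and freeness is settled by the triangularity of the multiplication-by-generator matrix (nonzero entries only when $c\succeq j$, hence triangular in any compatible order; the paper uses the ordinary integer order) with diagonal entries $u_0t^{\,d_j-d_0-w_j}$, forcing $w_j=d_j-d_0$ and, conversely, exhibiting $\rho_*=\rho_0$ as a free generator. Your packaging of both directions through one matrix computation is a mild streamlining of the paper's two-step argument, not a different method.
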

\begin{proof}
Since $\{ \Psi^{(j)}\}_{0 \leq j \leq q-1}$ is a $K$-basis of $K[G]$,
any element $\alpha$ of $K[G]$ may be written $ \alpha =
\sum_{j=0}^{q-1} c_j \Psi^{(j)}$ with $c_j \in K$.  Using
(\ref{Psi-rho}) we have
\begin{eqnarray*}
  \alpha \in \ALK & 
 \LRA & \alpha \cdot \rho_a \in \OL \mbox{ for all  } a \\
 & \LRA & \sum_{j \preceq q-1-a} c_j t^{d_{j+a}-d_a} \rho_{j+a} \in
 \OL  \mbox{ for all } a \\
  & \LRA & c_j t^{d_{j+a}-d_a} \in \OK \mbox{ if } j \preceq q-1-a
  \\
  & \LRA & v_K(c_j) \geq d_a - d_{j+a} \mbox{ if } j \preceq q-1-a
  \\
  & \LRA & -v_K(c_j) \leq w_j \mbox{ for all } j.
\end{eqnarray*}
Hence the elements $t^{-w_j} \Psi^{(j)}$ form an $\OK$-basis of
  $\ALK$.

Now suppose that $w_j=d_j-d_0$ for all $j$. As $\rho_*=\rho_0$,
the definition of $\rho_j$, preceding
(\ref{Psi-rho}), yields $t^{-w_j} \Psi^{(j)} \cdot \rho_* = \rho_j$, so
the basis elements $\{t^{-w_j} \Psi^{(j)}\}_{0\leq j\leq q-1}$ take
  $\rho_*$ to the basis elements of $\{\rho_j\}_{0 \leq j \leq q-1}$
  of $\OL$. Hence $\OL$ is a free $\ALK$-module on the generator
  $\rho_*$.

Conversely, suppose that $\OL$ is free over $\ALK$, say $\OL=\ALK
\cdot \eta$ where
$ \eta = \sum_{r=0}^{q-1} x_r \rho_r $ with $x_r \in \OK$. Then $\{
t^{-w_i} \Psi^{(i)} \cdot \eta\}_{0 \leq i \leq q-1}$ is an
$\OK$-basis for $\OL$, and using (\ref{Psi-rho}) we have
$t^{-w_i}\Psi^{(i)} \cdot \eta = \sum_{0\leq r\preceq q-1-i }
x_rt^{-w_i+d_{i+r}-d_r}\rho_{i+r} $, which is an $\OK$-linear
combination of the $\rho_j$ with $j \geq i$.  In other words, there is
an upper triangular matrix $(c_{i,j})$ with $c_{i,j}\in \OK$ such that
$t^{-w_i}\Psi^{(i)} \cdot \eta = \sum_{j=i}^{q-1} c_{i,j} \rho_{j} $.
This matrix is invertible, since $\{ \rho_j \}_{0 \leq j \leq q-1}$ is
also an $\OK$-basis for $\OL$. Thus $v_K(c_{i,i})=0$ for $0\leq i<q$,
which means that $v_K(x_0t^{-w_i+d_{i+0}-d_0})=0$, and thus $w_j=d_j-d_0$
as required.
\end{proof}
  
\begin{lemma} \label{lem} 
With the above notation, $w_j=d_j-d_0$ for all $j$ if and only if
$\bmax$ satisfies
$$ r(-\bmax)+r(-i\bmax)-r(-h\bmax)>0 $$
for all integers $h$, $i$, $j$ with $0 \leq h \leq i \leq j<q$
satisfying $i+j=q-1+h$ and ${i \choose h} \not \equiv 0 \pmod{p}$. 
\end{lemma}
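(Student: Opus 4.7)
The plan is to reformulate $w_j = d_j - d_0$ as an inequality on base-$p$ residues of multiples of $b = \bmax$, and then apply a complementary change of variables to match the indexing in the lemma's statement. Starting from $(1+a)b = d_a q + r((1+a)b)$, subtraction yields
$$ d_{a+j} - d_a = \frac{jb + r((1+a)b) - r((1+a+j)b)}{q}. $$
Taking $a = 0$ gives $w_j \leq d_j - d_0$, so $w_j = d_j - d_0$ for all $j$ amounts to the symmetric inequality
$$ r((1+a)b) + r((1+j)b) \geq r(b) + r((1+a+j)b) \qquad (\star) $$
for every $a, j \geq 0$ with $a + j \leq q - 1$ and no base-$p$ carries in $a + j$ (a reformulation of $a \preceq q - 1 - j$). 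Both sides of $(\star)$ are congruent modulo $q$ and lie in $[0, 2q-2]$, so their difference is in $\{-q, 0, q\}$. Since $(\star)$ is symmetric in $a, j$, I may restrict to $a \geq j$.

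Next I substitute $i := q - 1 - a$, $j_L := q - 1 - j$, and $h := q - 1 - (a + j) = i + j_L - (q - 1)$. The elementary identity $x \preceq y \iff q - 1 - y \preceq q - 1 - x$ (valid because every digit of $q - 1$ in base $p$ equals $p - 1$) lets me verify that pairs $(a, j)$ with $a \geq j \geq 0$, $a + j \leq q - 1$, and no carries correspond bijectively to triples $(h, i, j_L)$ with $0 \leq h \leq i \leq j_L < q$, $i + j_L = q - 1 + h$, and $h \preceq i$; by Lucas's theorem, $h \preceq i$ is equivalent to $\binom{i}{h} \not\equiv 0 \pmod{p}$. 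Using the congruences $1 + a \equiv -i$, $1 + j \equiv -j_L$, $1 + a + j \equiv -h \pmod{q}$, together with $r(b) = q - r(-b)$, condition $(\star)$ rewrites as
$$ r(-b) + r(-ib) + r(-j_L b) - r(-hb) \geq q. $$

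The key observation in the final step is that the left-hand side above is a multiple of $q$ (because $-1 - i - j_L + h \equiv -q \equiv 0 \pmod{q}$) lying in $[-(q-1), 3(q-1)]$, hence equal to $0$, $q$, or $2q$. Since $r(-j_L b) > 0$ (as $j_L b \not\equiv 0 \pmod q$), in these three cases the quantity $r(-b) + r(-ib) - r(-hb)$ equals $-r(-j_L b)$, $q - r(-j_L b)$, or $2q - r(-j_L b)$, respectively, and is strictly positive precisely in the last two. Thus $(\star)$ at $(a, j)$ is equivalent to the lemma's inequality $r(-b) + r(-ib) - r(-hb) > 0$ at the corresponding $(h, i, j_L)$, and quantifying over all valid tuples yields the equivalence the lemma asserts.

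The main obstacle I anticipate is the bookkeeping in the change of variables: verifying that the no-carry condition on $a + j$ corresponds via complementation to $h \preceq i$ (so that the $p$-adic binomial hypothesis in the lemma emerges naturally), and checking that restricting to $a \geq j$ by the symmetry of $(\star)$ aligns precisely with the indexing $i \leq j_L$ in the lemma's statement.
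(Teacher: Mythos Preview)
Your proposal is correct and follows essentially the same route as the paper: rewrite $w_j=d_j-d_0$ as the symmetric inequality $d_{a+j}-d_a\ge d_j-d_0$ (your $(\star)$), pass to complementary variables $i=q-1-a$, $j_L=q-1-j$, $h=q-1-(a+j)$, identify the no-carry condition with $\binom{i}{h}\not\equiv 0\pmod p$ via Lucas/Kummer, and then reduce the residue inequality modulo $q$ to the strict positivity condition on $r(-b)+r(-ib)-r(-hb)$. The only cosmetic difference is that you keep $r(-j_Lb)$ on the left and argue the total lies in $\{0,q,2q\}$, whereas the paper moves it to the right and compares with $q-r(-jb)$; these are the same computation.
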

\begin{proof}
The condition $w_j=d_j-d_0$ for all $j$ can be restated as
\begin{equation} \label{d-ineq}
 d_{x+y}-d_x \geq d_y - d_0 \mbox{ if } x \preceq q-1-y.
\end{equation}
As this is symmetric in $x$ and $y$, we may assume $x \geq y$.

Let $i=q-1-x$, $j=q-1-y$ and $h=q-1-x-y$. So $i+j=q-1+h$.
The first step is to prove
that $0\leq y\leq x\leq q-1$ and $x \preceq q-1-y$ if and only if $0
\leq h \leq i \leq j \leq q-1$  and ${i \choose h} \not
\equiv 0 \pmod{p}$.
Observe that ${i \choose h}
\not \equiv 0 \pmod{p}$ holds if and only if  there are no carries in the
base-$p$ addition of $h$ and $i-h=y$ (see for example
\cite[p.~24]{Rib}).

Observe that $x \preceq q-1-y$ means that $x_{(s)}+y_{(s)}\leq p-1$
for all $0\leq s\leq n$. Using the definition of $h$, this means that
$h\geq 0$ and $h_{(s)}= p-1-x_{(s)}-y_{(s)}$ for all $0\leq s\leq n$.
So $0\leq y\leq x\leq q-1$ and $x \preceq q-1-y$ means that $0 \leq h
\leq i \leq j \leq q-1$ and $h_{(s)}+y_{(s)}\leq p-1$ for all $0\leq
s\leq n$. So there are no carries occur in the base-$p$ addition of
$h$ and $y$.

On the other hand, assume that $0 \leq h \leq i \leq j \leq q-1$,
$h_{(s)}+y_{(s)}\leq p-1$ for all $0\leq s\leq n$, and for a
contradiction that there is an $s$ such that $x_{(s)}+y_{(s)}\geq p$.
We may assume that $s$ is the smallest such subscript. Thus
$x_{(r)}+y_{(r)}\leq p-1$ for all $0\leq r <s$ and
$x_{(s)}+y_{(s)}=p+c_s$ where $0\leq c_s\leq p-1$.  This means that
$h_{(r)}= p-1-x_{(r)}-y_{(r)}$ for all $0\leq r<s$, and
$h_{(s)}=p-1-c_s$. So $h_{(r)}+y_{(r)}\leq p-1$ for all $0\leq r <s$
and $h_{(s)}+y_{(s)}=p-1-c_s+y_{(s)}=2p-1-x_{(s)}\geq p$.

It therefore remains to show that the inequality $d_{x+y}-d_x\geq d_y
-d_0$ corresponds to $r(-b)+r(-ib)-r(-hb)>0$, where $b=\bmax$.  For
$0\ \leq m \leq q-1$ we have $ (m+1)b = q d_m + r( (m+1)b )$,
so that
$ q(d_{x+y}-d_x) = yb + r( (x+y+1)b ) - r( (x+1)b
)$.
Hence
\begin{eqnarray*}
\lefteqn{d_{x+y}-d_x \geq d_y-d_0 } \\ 
  & \LRA & yb -r((x+y+1)b) +r((x+1)b)\geq 
    yb - r((y+1)b) + r(b) \\
  & \LRA & - r(-hb)+r(-ib) \geq -r(-jb) +q -r(-b) \\  
  & \LRA & r(-b)+r(-ib) -r(-hb)\geq  q-r(-jb). 
\end{eqnarray*}
Now $r(-b)+r(-ib) -r(-hb) \equiv  -r(-jb) \pmod{q}$ since $i+j=q-1+h$,
and $1 \leq q-r(-jb) \leq q$. Thus the last inequality is equivalent
to 
$ r(-b)+r(-ib)-r(-hb)>0$ where $b=\bmax$, 
as required.
\end{proof}
\begin{proof}[Proof of Theorem \ref{assoc-main}]
Note that, by (\ref{ramnos}), all the (lower) ramification numbers are
congruent modulo $q$. Therefore $r(-b)=r(-\bmax)$ and for all integers
$s$, $r(s\cdot r(-b))=r(-sb)$. 
As a result of Lemma \ref{lem}, we conclude that $\OL$ is free over its associated order
$\ALK$ if and only if 
\begin{equation}\label{miy-cond}
 r(-b)+r(-ib)-r(-hb)>0 
\end{equation} for all integers $h$, $i$, $j$ with $0
\leq h \leq i \leq j<q$ satisfying $i+j=q-1+h$ and ${i \choose h} \not
\equiv 0 \pmod{p}$.  This is Miyata's necessary and sufficient
condition for $\OE$ to be free over $\AEF$ (recall that $E/F$ is a
cyclic extension in characteristic $0$). 
So our conclusion agrees with Miyata's result, as recorded in \cite[Theorem
  1.3]{NB-M} where $r(-b)=t_0$, except that in this paper we have
$q=p^{n+1}$ (instead of $q=p^n$).

Note that the purpose of \cite{NB-M} was to translate Miyata's
necessary and sufficient condition, namely
(\ref{miy-cond}),
into the condition $r(b)\in \scrS(q)$
given in \cite[Theorem 1.8]{NB-M}. Thus, it is the content of the
proof for \cite[Theorem 1.8]{NB-M} that now allows us to conclude, in
our situation, that $\OL$ is free over its associated order $\ALK$ if
and only if $r(b)\in \scrS(q)$.
\end{proof}

The proof
of \cite[Theorem 1.5]{NB-M} yields the fact that for $n\leq 1$, $\OL$
is free over $\ALK$ if and only if $r(b)$ divides $q-1$.  The proof of
\cite[Theorem 1.6]{NB-M} yields the fact that for $q=p^{n+1}$ and $n\geq 2$, $\OL$ is
free over $\ALK$ if $r(b)$ divides $p^d-1$ for some $d \in \{1,
\ldots, n+1\}$.

\bibliography{bib}
\end{document}